\newtheorem{thm}{Theorem}[section]
\newtheorem{lmm}[thm]{Lemma}
\newtheorem{conjecture}[thm]{Conjecture}
\theoremstyle{definition}
\theoremstyle{remark}
\newcommand{\C}{{\mathbb{C}}}
\newcommand{\R}{{\mathbb{R}}}
\newcommand{\HH}{{\mathbb{H}}}
\newcommand{\Z}{{\mathbb{Z}}}
\newcommand{\qtq}[1]{\quad\text{#1}\quad}
\newcommand{\eps}{\varepsilon}
\DeclareMathOperator{\tr}{tr}
\DeclareMathOperator{\dist}{dist}
\DeclareMathOperator{\He}{He}
\let\Re=\undefined\DeclareMathOperator{\Re}{Re}
\let\Im=\undefined\DeclareMathOperator{\Im}{Im}
\let\det=\undefined\DeclareMathOperator{\det}{det}
\DeclareMathOperator{\E}{\mathbb{E}}
\DeclareMathOperator{\PP}{\mathbb{P}}
\newcommand{\Sing}{\mathcal{S}}
\begin{document}

\title[Sonin's argument, the shape of solitons, and the most stably singular matrix]{Sonin's argument, the shape of solitons,\\and the most stably singular matrix}

\author{Rowan Killip and Monica Vi\c{s}an}

\address
{Rowan Killip\\
Department of Mathematics\\
University of California, Los Angeles, CA 90095, USA}
\email{killip@math.ucla.edu}

\address
{Monica Vi\c{s}an\\
Department of Mathematics\\
University of California, Los Angeles, CA 90095, USA}
\email{visan@math.ucla.edu}

\begin{abstract}
We present two adaptations of an argument of Sonin, which is known to be a powerful tool for obtaining both qualitative and quantitative information about special functions; see \cite{Szego}.  Our particular applications are as follows:

(i) We give a rigorous formulation and proof of the following assertion about focusing NLS in any dimension:  The spatial envelope of a spherically symmetric soliton in a repulsive potential is a non-increasing function of the radius.

(ii) Driven by the question of determining the most stably singular matrix, we determine the location of the maximal eigenvalue density of an $n\times n$ GUE matrix.  Strikingly, in even dimensions, this maximum is \emph{not} at zero.
\end{abstract}

\maketitle

%%%%%%%%%%%%%%%%%%%%%%%%%%%%%%%%%%%%%%%%%%%%%%%%%%%%%%%%%%%%%%%%%%%%%%%%%%%%%%%%%%%%%%%%%%%%%%%%%%%%%%%%%%%%%%%%%%%%%%%%%%%%%%
\section{Introduction}
%%%%%%%%%%%%%%%%%%%%%%%%%%%%%%%%%%%%%%%%%%%%%%%%%%%%%%%%%%%%%%%%%%%%%%%%%%%%%%%%%%%%%%%%%%%%%%%%%%%%%%%%%%%%%%%%%%%%%%%%%%%%%%

We consider two questions in this paper: one taken from the study of dispersive PDE, the other from random matrix theory.  The common feature is that both will be tackled by adapting an elegant argument of Sonin \cite{Sonin}.

We learned of Sonin's argument from Szeg\H{o}'s book \cite{Szego}, where it forms the subject of Section~7.31 (see also the prefatory remarks to Chapter~VII).  We quote here verbatim the statement of Theorem~7.31.1 from \cite{Szego}:

\begin{thm}\label{T:S}
Let $y=y(x)$ satisfy the differential equation
$$
y'' + \phi(x) y =0,
$$
where $\phi(x)$ is a positive function having a continuous derivative of a constant sign in $x_0<x<X_0$.  Then  the successive relative maxima of $|y|$, as $x$ increases from $x_0$ to $X_0$, form an increasing or decreasing sequence according as $\phi(x)$ decreases or increases.
\end{thm}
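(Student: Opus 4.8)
The plan is to run Sonin's argument in its classical form: attach to a solution $y$ an ``energy'' that is monotone throughout $(x_0,X_0)$ and that coincides with $|y|^2$ at every critical point of $y$, so that the monotonicity of the former is inherited by the successive relative maxima of the latter. Two elementary points come first. One may assume $y\not\equiv0$, and since $\phi$ is continuous the equation $y''=-\phi y$ forces $y\in C^2(x_0,X_0)$. More to the point, every relative maximum of $|y|$ in $(x_0,X_0)$ occurs at a zero of $y'$: at a point $c$ with $y(c)=0$ the function $|y|$ attains its least possible value $0$, which is not a local maximum unless $y$ vanishes identically near $c$; and at a point $c$ with $y(c)\neq0$ one has $|y|=\pm y\in C^1$ near $c$, so any local extremum forces $y'(c)=0$. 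Hence it is enough to compare the values $|y(c)|$ as $c$ runs over the successive zeros of $y'$.

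Next I would introduce
\[
F(x)\;=\;y(x)^2+\frac{y'(x)^2}{\phi(x)},
\]
which is well defined and $C^1$ because $\phi>0$. Differentiating and substituting $y''=-\phi y$ cancels the $2yy'$ terms and leaves
\[
F'(x)\;=\;-\,\frac{\phi'(x)}{\phi(x)^2}\,y'(x)^2 .
\]
Since $\phi>0$, the sign of $F'$ is everywhere opposite to that of $\phi'$; thus $F$ is non-increasing when $\phi$ is increasing and non-decreasing when $\phi$ is decreasing. To upgrade this to strict monotonicity, note that $y'$ cannot vanish on a whole subinterval of $(x_0,X_0)$, for there $y$ would be constant, hence $\phi y\equiv0$, hence $y\equiv0$ by uniqueness for the ODE. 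Therefore $\int_a^b F'\neq0$ whenever $a<b$, so $F$ is \emph{strictly} monotone on $(x_0,X_0)$.

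It then remains only to read off the conclusion: if $c_1<c_2<\cdots$ enumerate the successive relative maxima of $|y|$, then $y'(c_k)=0$ gives $F(c_k)=y(c_k)^2=|y(c_k)|^2$, so the strict monotonicity of $F$ says precisely that $|y(c_1)|,|y(c_2)|,\dots$ increases when $\phi$ decreases and decreases when $\phi$ increases. The computation of $F'$ is a single line, so the only steps that require any care are (a) the identification of the relative maxima of $|y|$ with the zeros of $y'$, and (b) the passage from ``monotone'' to ``strictly monotone''. Of these I expect (b) to be the main (indeed the only) place where one is tempted to be cavalier, since it is where the constant sign of $\phi'$ and the non-vanishing of $y'$ on subintervals are genuinely used; if only the non-strict statement is needed, (b) can be dropped entirely.
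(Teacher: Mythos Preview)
Your proposal is correct and follows exactly the paper's approach: the paper introduces the same auxiliary function $f(x)=y(x)^2+\phi(x)^{-1}y'(x)^2$, computes $f'(x)=-\phi'(x)\phi(x)^{-2}y'(x)^2$, and reads off the monotonicity of the successive maxima from $f(c)=y(c)^2$ at critical points. Your added remarks on (a) and (b) are reasonable elaborations; note only that your strictness argument in (b) tacitly uses that $\phi'$ itself does not vanish on a subinterval, not just $y'$, which is the usual reading of ``constant sign'' here.
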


The proof is ingeniously simple: The function
$$
f(x) = y(x)^2 + \tfrac{1}{\phi(x)} y'(x)^2
    \qtq{satisfies} f'(x) = - \tfrac{\phi'(x)}{\phi(x)^2}\, y'(x)^2
$$
and so has opposite monotonicity to that of $\phi$.  On the other hand, $f(x)=y(x)^2$ at each local extremum of $y(x)$.  Thus, the theorem is proved.

Theorem~\ref{T:S} provides a powerful tool for understanding the overall shape of the classical special functions as can been seen already from the applications detailed in \cite{Szego}.  Indeed, it is worthy of note that this is the argument of choice, despite the fact that the classical special functions admit a wealth of series and integral representations.

In Section~\ref{S:2} we apply a similar argument to describe (at least qualitatively) the spatial envelope of solitons for NLS. In Section~\ref{S:3}, we determine the point of highest density in the eigenvalues of a GUE matrix.  In particular, we will see that this point is non-zero in even dimensions, which shows that zero is not the most stably singular matrix.  These two sections may be read independently of one another.  With this in mind, we leave more precise formulations, together with the necessary preliminaries, to the individual sections.  In closing, however, we would like present an application of Theorem~\ref{T:S} that requires no prerequisites, namely, to rigorously demonstrate the well-known decaying envelope overlaying the oscillatory behaviour of Bessel functions.

The Bessel function $J_0(x)$ is defined as the unique solution to
\begin{equation}\label{E:J0}
x^2 y''(x) + x y'(x) + x^2 y(x)=0 \qtq{with} y(0)=1.
\end{equation}
This ODE has a regular singular point at $x=0$; any solution linearly independent of $J_0(x)$ is unbounded near $x=0$.

It is elementary to verify from \eqref{E:J0} that
\begin{equation}\label{E:J0'}
y(x) = J_0(e^x) \qtq{solves} y''(x) + e^{2x} y(x) =0
\end{equation}
and that
\begin{equation}\label{E:J0''}
y(x) = \sqrt{x} J_0(x) \qtq{solves} y''(x) + \bigl( 1 + \tfrac{1}{4x^2}\bigr) y(x) =0 \quad \text{for $x>0$}.
\end{equation}

Applying the Sturm comparison theorem (cf. \cite[Ch. 8]{CodLev}) to either \eqref{E:J0'} or \eqref{E:J0''} shows that the Bessel function changes sign infinitely many times on the positive axis.  On the other hand, applying Theorem~\ref{T:S} to \eqref{E:J0'}, we see that these oscillations are decaying in magnitude, as measured, for example, by the size of extrema between successive zeros.  As a counter point, however, we see by applying Theorem~\ref{T:S} to \eqref{E:J0''}, that $J_0(x)$ is not $o(x^{-1/2})$ as $x\to\infty$.

\section{The shape of solitons}\label{S:2}

Solutions of the form $\psi(t,x)=u(x)e^{i\omega t}$ to
\begin{align}\label{NLS psi}
i\partial_t \psi = -\Delta \psi + V \psi - |\psi|^p\psi  
\end{align}
are commonly termed solitons, at least when $u(x)$ decays appropriately at infinity.  In this paper, we follow the widely adopted practice of requiring that $u\in H^1(\R^d)$.  For ease of exposition, we shall assume throughout that $V\in C^\infty(\R^d)$.  In particular, it follows that $\psi\in C^\infty$.

The goal of this section is to show that if $u(x)$ (and so also $V(x)$) is spherically symmetric and $V(x)$ is repulsive, in the sense that $x\cdot \nabla V(x)\leq 0$, then the envelope of $u(x)$ is a decreasing function of radius.  

Due to the assumption of spherical symmetry, the claim just made reduces to a result about ordinary differential equations.  Concretely, $\psi(t,x) =e^{i\omega t} y(|x|)$ is a solution to \eqref{NLS psi} if and only if $y$ is a smooth solution to
\begin{align}\label{NLSODE}
-y''(r) - \tfrac{d-1}{r} y'(r) + V(r) y(r) - |y(r)|^p y(r) = - \omega y(r) \qtq{with} y'(0)=0,
\end{align}
where we agree to write $V(x)=V(|x|)$, in line with the fact that $V$ is spherically symmetric.  The fact that $V$ is repulsive may now be written as $V'(r)\leq 0$ for $r\geq 0$.

While $u$ and so $y$ may be complex-valued, in principle, let us now observe that we may always reduce matters to the real-valued case.  Multiplying \eqref{NLSODE} by $r^{d-1}\bar y(r)$ and then taking imaginary parts yields
$$
\tfrac{d\ }{dr} \Im \{ r^{d-1} \bar y(r) y'(r) \} = 0.
$$ 
On the other hand, as $u\in H^1(\R^d)$, we know that $\Im \{ r^{d-1} \bar y(r) y'(r)\} \to 0$ as $r\to\infty$ (at least along some subsequence).  Thus $\Im\{ \bar y(r) y'(r)\}\equiv0$.  This then implies that $y$ can be written as a real-valued function multiplied by a uni-modular complex number.  (We use here that $y$ and $y'$ cannot vanish simultaneously without forcing $y\equiv 0$.)  This complex number can then be factored out of \eqref{NLSODE} leaving us to consider only real-valued solutions to \eqref{NLSODE}.

The main result of this section is the following (we also discuss two further applications at the end of the section):

\begin{thm}\label{T:ode}  Let $y:[0,\infty)\to\R$ be a solution to \eqref{NLSODE} with $V$ repulsive.  Then the successive local maxima of $|y(r)|$ form a non-increasing sequence as $r$ increases over the interval $[0,\infty)$. 
\end{thm}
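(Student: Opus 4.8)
The plan is to transplant Sonin's device --- the function $f$ used just after Theorem~\ref{T:S} --- to the present nonlinear, $d$-dimensional setting. Having already reduced to real-valued $y$, rewrite \eqref{NLSODE} in the form
$$
y'' + \tfrac{d-1}{r}\,y' + \phi(r)\,y = 0, \qquad \phi(r) := |y(r)|^p - V(r) - \omega ,
$$
and introduce the Sonin function
$$
F(r) := y(r)^2 + \frac{y'(r)^2}{\phi(r)} ,
$$
defined on the open set $\{r>0 : \phi(r)>0\}$. Exactly as in the proof of Theorem~\ref{T:S}, $F$ agrees with $y^2$ at each critical point of $y$, and differentiating and using the ODE yields
$$
F'(r) = -\frac{y'(r)^2}{\phi(r)^2}\Bigl(\phi'(r) + \tfrac{2(d-1)}{r}\,\phi(r)\Bigr), \qquad \phi'(r) = p\,|y|^{p-1}\,\signum(y)\,y' - V'(r) .
$$
Two features distinguish this from Szeg\H{o}'s hypotheses. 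First, the transport term $\tfrac{d-1}{r}y'$ merely contributes the extra summand $\tfrac{2(d-1)}{r}\phi$, which is non-negative wherever $F$ is defined; this is harmless. Second --- and this is the real difficulty --- the nonlinearity makes the ``potential'' $\phi$ depend on the solution, so $\phi'$ carries the term $p|y|^{p-1}\signum(y)y' = \tfrac{d}{dr}\bigl(|y|^p\bigr)$, whose sign is that of $\tfrac{d}{dr}|y|$; and, worse, $\phi$ itself changes sign --- near a zero of $y$ one has $\phi = -(V+\omega)$, which is negative whenever $V+\omega>0$ --- so $F$ is unavailable precisely where $y$ oscillates.

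I would organise the argument around these degeneracies by means of one structural observation: on each connected component of $\{r>0:\phi(r)>0\}$ the solution $y$ keeps a constant sign and has at most one critical point, necessarily a local maximum of $|y|$. Indeed, at an interior critical point $r_*$ of $y$ with $y(r_*)\neq0$ the ODE forces $y''(r_*)=-\phi(r_*)\,y(r_*)$, which has the sign opposite to that of $y(r_*)$, so $r_*$ is a local maximum of $|y|$; and a sign change of $y$ would produce a zero, where $\phi<0$. Hence any two successive local maxima $r_k<r_{k+1}$ of $|y|$ lie in distinct components of $\{\phi>0\}$, and on $[r_k,r_{k+1}]$ the function $|y|$ decreases to a single local minimum --- which lies in $\{\phi\le0\}$, at a zero of $y$ if there is one there --- and then increases. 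On the ascending part both $\tfrac{d}{dr}|y|\ge0$ and $-V'\ge0$, so $\phi'\ge0$; thus $F'\le0$ on whatever portion of the ascent lies in $\{\phi>0\}$, and there $F$ descends to the value $y(r_{k+1})^2$. This is the favourable half of the oscillation, and it is where the repulsivity of $V$ pulls in the right direction.

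The main obstacle is the unfavourable half: the descent out of the component containing $r_k$, together with the passage across the non-oscillatory band $\{\phi\le0\}$ separating the two components, where $F$ is unavailable (and, even where available, is not monotone, since on a descent $\tfrac{d}{dr}(|y|^p)<0$ may overpower $-V'$). To treat this I would work directly on the band $\{\phi\le0\}$: there the equation reads $y''+\tfrac{d-1}{r}y' = |\phi|\,y$ with $|\phi|\ge0$, which is non-oscillatory and --- via the resulting monotonicity of $r^{d-1}y'$ on each sub-interval where $y$ has a fixed sign --- constrains the shape of $|y|$ near its single minimum, while at the two edges of the band $|y|$ equals the explicit threshold $(V+\omega)^{1/p}$, a quantity that is larger at the left edge (the one nearer $r_k$) because $V$ is non-increasing. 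Splicing this comparison onto the Sonin inequality --- run outward from $r_k$ and inward from $r_{k+1}$ to the edges of the respective $\{\phi>0\}$ components --- should close the bound $y(r_{k+1})^2\le y(r_k)^2$. I expect that fitting these pieces together cleanly, and disposing of the degenerate case in which a local maximum of $|y|$ sits on the boundary $\{\phi=0\}$, is the technical heart of the matter.
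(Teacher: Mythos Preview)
Your proposal has a concrete obstruction beyond the incompleteness you acknowledge: the Sonin function $F(r) = y^2 + (y')^2/\phi$ blows up at the boundary of each component of $\{\phi>0\}$, since generically $y'\neq 0$ there while $\phi\to 0^+$. Running the Sonin inequality on the ascent from the right edge of the band to $r_{k+1}$ therefore yields only the vacuous statement $y(r_{k+1})^2 = F(r_{k+1}) \le F(\text{right edge}) = +\infty$; no comparison with the threshold value $(V+\omega)^{1/p}$ at the edge survives, and the splicing cannot close as described. (On the descent out of $r_k$ you have already conceded that $F$ need not be monotone, so that leg contributes nothing either.) A secondary gap: your claim that a zero of $y$ forces $\phi<0$ fails wherever $V+\omega\le 0$, a case the theorem does not exclude.

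The missing idea is to absorb the nonlinearity into the Sonin functional rather than into the potential. The paper takes
\[
f(r) := \tfrac12\,[y'(r)]^2 + \tfrac{1}{p+2}\,|y(r)|^{p+2} - \tfrac12\,[V(r)+\omega]\, y(r)^2,
\]
the mechanical energy for $y'' = (V+\omega)y - |y|^p y$, which is globally defined and satisfies
\[
f'(r) = -\tfrac{d-1}{r}\,[y'(r)]^2 - \tfrac12\,V'(r)\,y(r)^2.
\]
The friction term is favourable; the term $-\tfrac12 V' y^2\ge 0$ is not, but under the contradiction hypothesis $|y(r_{k+1})|>|y(r_k)|$ one has $|y|\le |y(r_{k+1})|$ throughout $[r_k,r_{k+1}]$, so integration gives $f(r_{k+1})-f(r_k)\le \tfrac12[V(r_k)-V(r_{k+1})]\,y(r_{k+1})^2$. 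Rearranged, this reads $g_k(y(r_k))\ge g_k(y(r_{k+1}))$ with $g_k(y):=\tfrac{1}{p+2}|y|^{p+2}-\tfrac12[V(r_k)+\omega]y^2$. But the second-derivative condition $y(r_k)y''(r_k)\le 0$ at a maximum, fed into \eqref{NLSODE}, gives $|y(r_k)|^{p+2}\ge [V(r_k)+\omega]\,y(r_k)^2$, which makes $g_k$ non-decreasing on $[|y(r_k)|,\infty)$ and yields the contradiction.
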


\begin{proof}
Let $r_k$ denote the locations of the successive local maxima of $|y(r)|$, which we enumerate consecutively outward from the origin.  As extrema, we have
$$
y'(r_k) =0 \qtq{and} y(r_k) y''(r_k) \leq 0.
$$
Plugging this information into \eqref{NLSODE} we deduce that
\begin{align}\label{2:33}
|y(r_k)|^{p+2} \geq [ V(r_k) +  \omega ]  y(r_k)^2.
\end{align}

Consider now
\begin{align*}
f(r):= \tfrac12 \bigl[y'(r)\bigr]^2 - \tfrac12 V(r) y(r)^2 + \tfrac1{p+2}\bigl|y(r)\bigr|^{p+2} - \tfrac\omega2 y(r)^2 ,
\end{align*}
which obeys
\begin{align*}
f'(r) = - \tfrac{d-1}{r} \bigl[y'(r)\bigr]^2 - \tfrac12 V'(r) y(r)^2 .
\end{align*}

Assuming, toward a contradiction, that $y(r_{k+1})^2\geq y(r_k)^2$, we obtain
\begin{align*}
f(r_{k+1}) - f(r_k) \leq \int_{r_k}^{r_{k+1}} - \tfrac12  V'(r) y(r)^2 \,dr \leq \tfrac12  \bigl[ V(r_k) - V(r_{k+1})\bigr] y(r_{k+1})^2,
\end{align*}
which may then be rearranged to reveal
\begin{align}
g_k\bigl(y(r_k)\bigr) \geq g_k\bigl(y(r_{k+1})\bigr)  \qtq{where} g_k(y) := \tfrac1{p+2}\bigl|y\bigr|^{p+2} - \tfrac12\bigl[V(r_k)  + \omega \bigr] y^2 .
\end{align}

In order to reach a contradiction and so complete the proof, it remains to show that $g_k(y)$ is an increasing function over the interval $[|y(r_k)|,\infty)$, which contains $|y(r_{k+1})|$ due to our contradiction hypothesis.
This is easily achieved using \eqref{2:33}:
\begin{align*}
y\partial_y g_k(y) & = \bigl|y\bigr|^{p+2} - \bigl[V(r_k)  + \omega \bigr] y^2 \\
&\geq \frac{y^2}{y(r_k)^2} \Bigl[ |y(r_k)|^{p+2} - [ V(r_k) +  \omega ]  y(r_k)^2 \Bigr] \geq0,
\end{align*}
whenever $|y| \geq |y(r_k)|$.
\end{proof}

Thus far, our discussion has focused on spherically symmetric solutions.  As we will now explain, Theorem~\ref{T:ode} can also be applied to a wider class of solutions in dimensions one and two (where there are non-trivial spherical harmonics of constant modulus).  We retain the requirements that $V$ be symmetric and repulsive.

Let us first consider the case $d=1$ and let $y:\R\to\R$ be an \emph{odd} solution to \eqref{NLSODE}.  Evidently, $r=0$ is not a local maximum of $y$.  Let $r_0>0$ denote the first such local maximum.  Then we may apply Theorem~\ref{T:ode} to the function $\tilde y(r) = y(r+r_0)$ with attendant potential $V(r+r_0)$ and so discover that the successive local maxima of $|y(r)|$ remain a non-increasing function of radius (over the whole range $r\geq 0$) in the case of odd solutions on the line.

Turning now to $d=2$, we adopt polar co-ordinates: $x=(r\cos(\theta),r\sin(\theta))$ and consider solutions to \eqref{NLS psi} of the form $\psi(t,x)=y(r)e^{i \ell\theta + i\omega t}$ where $\ell\in\Z$.  Simple computations show that for such a function to be a solution to \eqref{NLS psi}, one must have
\begin{align*}
-y''(r) - \tfrac{d-1}{r} y'(r) + \bigl[V(r)+\tfrac{\ell^2}{r^2} + \omega\bigr]  y(r) - |y(r)|^p y(r) = 0.
\end{align*}
Moreover, unless $\ell=0$ (which was treated already), we must have $y(r)\to 0$ as $r\downarrow 0$.  In this way, we find ourselves in the setting of the previous paragraph: we choose $r_0>0$ as the radius of the first local maximum of $|y(r)|$ and apply Theorem~\ref{T:ode} to $\tilde y(r) = y(r+r_0)$.  Once again, we find that the envelope of such solutions is a decreasing function of radius, for all $r>0$.

\section{The most stably singular matrix}\label{S:3}

There can be little argument that the zero matrix is the `most singular' of all symmetric matrices.  In this section, we discuss how this can fail under the addition of Gaussian noise.  

The most natural notion of Gaussian noise is that adopted in random matrix theory, which we will now describe. See \cite{Mehta} for an alternate introduction and further discussion.

The sets of $n\times n$ real-symmetric matrices, complex hermitian matrices, and quaternion self-dual matrices, are all vector spaces over $\R$ and all admit a natural inner product:
$$
\langle A, B\rangle = \Re \tr ( A^\dagger B ),
$$
where $\dagger$ denote the transpose, the hermitian conjugate, or the quaternionic dual, as appropriate.   Having fixed a base field and a size $n$, let $\{E_j\}$ denote an orthonormal basis (over $\R$) for the associated space and then define
$$
X = \sum_j Z_j E_j
$$
where $Z_j\sim N(0,1)$ denote independent standard Gaussian random variables.  The law of this random matrix $X$ is said to be GOE when the base is $\R$. It is called GUE and GSE when the base field is $\C$ and $\HH$, respectively.

We may now be more precise about the question we wish to tackle in this section:  For what deterministic matrix $A$ is
\begin{equation}\label{E:M}
M = A + X
\end{equation}
most likely to be singular?  It is natural to place a constant in front of $X$ to represent the size of the noise; however, this can always be scaled away.

Of course, $A + X$ has zero probability of being singular because the set of singular matrices
$$
\Sing=\{B : \det B =0\}
$$
is a variety of co-dimension one.  (For the theory of determinants over $\HH$, see \cite{DysonQdet}.) Correspondingly, we interpret the likelihood of being singular through the Radon--Nikodym derivative; see \eqref{E:L:RN'} below.

As in \cite{Mehta}, the \emph{one-point function}, or density of eigenvalues, of the random matrix $M$ will be denoted $R^{(1)}(x)$.  It is uniquely determined by the relation
$$
\int_\R   p(x)  R^{(1)}(x)\,dx = \E \tr\{ p(M) \}, \quad\text{for all polynomials $p$}.  
$$
Note that $\int R^{(1)}(x)\,dx = n$, the number of eigenvalues.

\begin{lmm}\label{L:RN}
\begin{equation}\label{E:L:RN}
    \PP\bigl\{ \dist(M,\Sing) \leq \eps\bigr\} =  \PP\bigl\{ \|M^{-1}\|_{\text{\upshape op}} \geq \eps^{-1} \bigr\} = \int_{-\eps}^\eps R^{(1)}(x)\,dx  + o(\eps)
\end{equation}
and correspondingly,
\begin{equation}\label{E:L:RN'}
    \lim_{\eps\to 0} \tfrac{1}{2\eps} \PP\bigl\{ \dist(M,\Sing) \leq \eps\bigr\} =  R^{(1)}(0).
\end{equation}
\end{lmm}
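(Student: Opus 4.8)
The plan is to reduce \emph{both} events appearing in \eqref{E:L:RN} to the single event that $M$ has at least one eigenvalue in $[-\eps,\eps]$, and then to determine the probability of that event, up to an error $o(\eps)$, using only the first two eigenvalue correlation functions of $M$. The two equalities in \eqref{E:L:RN} are thus of quite different natures: the first is pure linear algebra, while the second is the quantitative heart of the matter, and \eqref{E:L:RN'} is then immediate.

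The first step is to record the relevant linear algebra. The matrix $M$ is almost surely invertible, so we may work on that event; write $\lambda_1,\dots,\lambda_n$ for the eigenvalues of the hermitian matrix $M$. Then $\|M^{-1}\|_{\op} = (\min_i|\lambda_i|)^{-1}$. Moreover, the distance from $M$ to $\Sing$, measured in operator norm, equals $\min_i|\lambda_i|$: replacing a minimal-modulus eigenvalue of $M$ by zero produces a (hermitian) singular matrix at exactly this distance, while if $M+E$ is singular with unit null vector $v$ then $\min_i|\lambda_i| \le \|Mv\| = \|Ev\| \le \|E\|_{\op}$. (By the Eckart--Young theorem the same identity holds for the Hilbert--Schmidt distance.) Consequently, both events in \eqref{E:L:RN} coincide with $\{N_\eps \ge 1\}$, where $N_\eps := \#\{\,i : \lambda_i\in[-\eps,\eps]\,\}$; in particular the first equality in \eqref{E:L:RN} holds.

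Next I would squeeze $\PP\{N_\eps\ge1\}$ between two expectations of $N_\eps$. Trivially $\PP\{N_\eps\ge1\} \le \E[N_\eps]$. In the other direction, since $(k-1)_+ \le k(k-1)$ for every nonnegative integer $k$, we have
\[
\E[N_\eps] - \PP\{N_\eps\ge1\} \;=\; \E\bigl[(N_\eps-1)_+\bigr] \;\le\; \E\bigl[N_\eps(N_\eps-1)\bigr].
\]
Now invoke the standard description of the one- and two-point functions $R^{(1)},R^{(2)}$ of $M$ (see \cite{Mehta}): they are continuous, the defining relation for $R^{(1)}$, extended from polynomials to indicators of intervals, gives $\E[N_\eps] = \int_{-\eps}^{\eps} R^{(1)}(x)\,dx$, and likewise $\E[N_\eps(N_\eps-1)] = \int_{[-\eps,\eps]^2} R^{(2)}(x,y)\,dx\,dy$. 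Since $R^{(2)}$ is continuous, hence bounded on $[-1,1]^2$ — and in the GUE case one even has the clean bound $R^{(2)}(x,y)\le R^{(1)}(x)R^{(1)}(y)$ coming from the determinantal structure — this last integral is $O(\eps^2)$. Combining the displays yields \eqref{E:L:RN}, and dividing by $2\eps$ and letting $\eps\to0$ yields \eqref{E:L:RN'}, using $\tfrac1{2\eps}\int_{-\eps}^{\eps}R^{(1)}(x)\,dx \to R^{(1)}(0)$ by continuity of $R^{(1)}$ at the origin.

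I do not anticipate a serious obstacle: the proof is a packaging of elementary linear algebra with classical facts about eigenvalue correlation functions. The one point that genuinely requires care is the assertion that $R^{(1)}$ and $R^{(2)}$ are bona fide, locally bounded (indeed continuous) densities satisfying $\int_I R^{(1)} = \E N_I$ and $\int_{I\times I} R^{(2)} = \E[N_I(N_I-1)]$ for intervals $I$. For $M = A + X$ with $X$ in any of the three ensembles this is standard — the density of $M$ on the relevant matrix space is a nondegenerate Gaussian, so the induced correlation functions are smooth — and I would simply cite \cite{Mehta} rather than reproduce the derivation.
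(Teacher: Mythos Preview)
Your argument is correct. The first equality and the upper bound $\PP\{N_\eps\ge1\}\le\E[N_\eps]=\int_{-\eps}^{\eps}R^{(1)}$ match the paper exactly. For the lower bound, however, you and the paper diverge: the paper controls the probability of two or more eigenvalues landing in $[-\eps,\eps]$ by the geometric observation that matrices with two vanishing eigenvalues form a variety of codimension strictly greater than one, so a Gaussian measure assigns its $\eps$-neighborhood probability $o(\eps)$. You instead stay within the correlation-function framework and bound $\E[(N_\eps-1)_+]\le\E[N_\eps(N_\eps-1)]=\int_{[-\eps,\eps]^2}R^{(2)}=O(\eps^2)$. Your route is arguably more self-contained, since $R^{(2)}$ is already part of the standard apparatus once $R^{(1)}$ is on the table, and it yields the sharper $O(\eps^2)$ remainder; the paper's codimension argument, on the other hand, works uniformly across all three base fields without needing to know anything about the smoothness of $R^{(2)}$, at the cost of the cruder $o(\eps)$.
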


\begin{proof}
The first equality in \eqref{E:L:RN} follows from the elementary fact that 
\begin{equation}
    \dist(B,\Sing) = \inf \{ |\lambda| : \lambda \text{ is an eigenvalue of $B$}\}.
\end{equation}
(See also the Hoffman--Wielandt inequality \cite{MR0052379}.)

Regarding the second equality in \eqref{E:L:RN}, we first note that
\begin{equation}\label{11:33}
\PP\bigl\{ \dist(M,\Sing) \leq \eps\bigr\} \leq  \int_{-\eps}^\eps R^{(1)}(x)\,dx,
\end{equation}
since RHS\eqref{11:33} represents the average number of eigenvalues in the interval $[-\eps,\eps]$.   Moreover, we see that it suffices to control the probability that such an interval contains two or more eigenvalues in order to obtain an inequality in the opposite direction.  The stated bound follows from the simple observation that the space of matrices with two vanishing eigenvalues is a variety (homogeneous with respect to scaling) of higher codimension.  The exact codimension depends on the ambient field, so we settle for the crude bound $o(\eps)$.  
\end{proof}

The principal result of this section is the determination of the location of the maximum of the one-point function $R^{(1)}(x)$ associated to the model \eqref{E:M} when $A=0$ and one is working over the complex field.  Equivalently, we determine the choice of $A$, from among numerical multiples of the identity, that maximizes the probability of being singular.  As we will see, the optimum is \emph{not} the zero matrix when $n$ is even.  Indeed, $x=0$ is a local \emph{minimum} of $R^{(1)}(x)$.  The intuitive explanation lies in the strength of eigenvalue repulsion: the middle eigenvalues each push each other away from the origin.  One expects this effect to be even stronger in the quaternionic case. The effect is weaker in the real-symmetric setting and messy computations (not reproduced here) show $x=0$ is a degenerate local maximum of $R^{(1)}(x)$ in that case.

While we are not aware of any prior works attacking precisely the question posed in this section, the relations \eqref{E:L:RN} and \eqref{11:33} connect this question to matters of on-going interest; see, for example,
\cite{MR3631932,MR3645115,MR3460183}.  Relative to these works, what we are seeking to achieve (in the setting of Gaussian noise) is not only the optimal power dependence on $n$, but even the optimal constant.

The explanation for working over the complex field is the fact that there is an elegant explicit formula for the one-point function; see \cite[\S5.2]{Mehta} and Lemma~\ref{1pt} below.  An explicit formula is also known for the one-point function in the case $M\sim A+GUE$, see \cite{MR2641363}; however, we are not able to handle the case of general $A$ at this time.

\begin{lmm}\label{1pt}
Suppose $M\sim{}$GUE. Then the one-point function is given by
\begin{align*}
R^{(1)}(x) &= \tfrac1{\sqrt{2\pi}} \sum_{k=0}^{n-1} p_k(x)^2 e^{-\frac12 x^2} = \sqrt{\tfrac{n}{2\pi}} \bigl[p'_{n}(x) p_{n-1}(x) - p_{n}(x) p'_{n-1}(x)\bigr] e^{-\frac12 x^2},
\end{align*}
where $p_k$ are rescaled Hermite polynomials,
\begin{equation}\label{HermiteDefn}
p_k(x) := \tfrac{(-1)^k}{\sqrt{k!}} \, e^{\frac12 x^2} \frac{d^k\ }{dx^k} e^{-\frac12 x^2}
    = \tfrac{x^k}{\sqrt{k!}} + \text{lower order}.
\end{equation}
These polynomials are orthonormal with respect to the measure $(2\pi)^{-1/2}e^{-x^2/2}\,dx$.
\end{lmm}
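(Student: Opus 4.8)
The statement to prove is Lemma~\ref{1pt}, the explicit formula for the GUE one-point function. Let me sketch a proof.

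\medskip

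The plan is to derive the two displayed expressions for $R^{(1)}(x)$ directly from the orthonormality of the rescaled Hermite polynomials and the classical Christoffel--Darboux identity.

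\medskip

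First I would recall the defining property: $R^{(1)}$ is the unique function with $\int p(x) R^{(1)}(x)\,dx = \E \tr\{p(M)\}$ for all polynomials $p$. The standard route is to write the joint eigenvalue density of GUE, which (up to normalization) is $\prod_{i<j}(\lambda_i - \lambda_j)^2 \prod_i e^{-\lambda_i^2/2}$, recognize the Vandermonde determinant $\prod_{i<j}(\lambda_i-\lambda_j) = \det[\lambda_i^{\,j-1}]$, and then perform row operations to replace the monomials $\lambda^{j-1}$ by the monic-proportional polynomials $p_{j-1}$ defined in \eqref{HermiteDefn}; this leaves the determinant unchanged up to a constant. One then obtains the determinantal structure: the correlation functions are minors of the kernel
\[
K_n(x,y) = \sum_{k=0}^{n-1} p_k(x) p_k(y)\, \tfrac{1}{\sqrt{2\pi}}\, e^{-\frac14 x^2 - \frac14 y^2},
\]
so that $R^{(1)}(x) = K_n(x,x) = \tfrac1{\sqrt{2\pi}} \sum_{k=0}^{n-1} p_k(x)^2 e^{-\frac12 x^2}$, which is the first claimed formula. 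The normalizing constant is pinned down by the requirement $\int R^{(1)} = n$, which holds precisely because the $p_k$ are orthonormal with respect to $(2\pi)^{-1/2}e^{-x^2/2}\,dx$ — a fact one checks by integration by parts in \eqref{HermiteDefn}. I would either cite \cite[\S5.2]{Mehta} for this determinantal reduction or sketch it in a line or two.

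\medskip

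Second, to pass to the Christoffel--Darboux form I would use the three-term recurrence satisfied by the $p_k$. Since $p_k = x^k/\sqrt{k!} + \text{lower order}$, matching leading coefficients in the standard Hermite recurrence gives $x\, p_k(x) = \sqrt{k+1}\, p_{k+1}(x) + \sqrt{k}\, p_{k-1}(x)$. The Christoffel--Darboux identity then yields
\[
\sum_{k=0}^{n-1} p_k(x) p_k(y) = \sqrt{n}\,\frac{p_n(x) p_{n-1}(y) - p_{n-1}(x) p_n(y)}{x - y},
\]
and taking $y \to x$ (l'Hôpital, or differentiating the numerator) produces $\sqrt{n}\,[p_n'(x) p_{n-1}(x) - p_n(x) p_{n-1}'(x)]$. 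Multiplying by $\tfrac1{\sqrt{2\pi}} e^{-\frac12 x^2}$ gives the second claimed formula.

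\medskip

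The main obstacle is not any single computation — each piece is classical — but rather deciding how much of the determinantal-process machinery to reproduce versus cite, and being careful with the various $\sqrt{2\pi}$ and $\sqrt{k!}$ normalization factors, since the paper uses the probabilists' weight $e^{-x^2/2}$ rather than the analysts' $e^{-x^2}$, and uses $L^2$-normalized (rather than monic) Hermite polynomials. I expect the cleanest exposition cites \cite[\S5.2]{Mehta} for the first equality and then spends a short paragraph verifying the recurrence coefficients and applying Christoffel--Darboux to obtain the second.
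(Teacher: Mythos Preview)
Your sketch is correct, but there is nothing to compare against: the paper does not prove Lemma~\ref{1pt} at all. It is stated as a known formula, with the reference to \cite[\S5.2]{Mehta} given in the preceding paragraph; the text then moves directly to recording the identities \eqref{Hode} and using the lemma as a black box. Your proposal to cite \cite[\S5.2]{Mehta} for the determinantal/first equality and then run Christoffel--Darboux for the second is exactly in the spirit of the paper's treatment, and in fact supplies more detail than the paper itself offers.
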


We will need the following basic facts about the Hermite polynomials:
\begin{align}\label{Hode}
- p_k''(x) + x p_k'(x) = k p_k(x) \qtq{and} p_{k}'(x) = \sqrt{k} p_{k-1}(x).
\end{align}
These relations can be found in \cite{MR0167642}, or derived directly from \eqref{HermiteDefn}.  In the notation of \cite{MR0167642}, we have $\He_k(x) = \sqrt{k!}\, p_k(x)$.

To locate the global maximum of $R^{(1)}(x)$, we employ the following Sonin-style lemma:

\begin{lmm}\label{L:Sonin}
Suppose $y''(x) + \phi(x) y(x)=0$ on an interval $[a,b]$ with $\phi\in C^1$ and strictly decreasing.  Then
the values of $|y'(x)|$ at the successive zeros of $y(x)$ form a strictly decreasing sequence.
\end{lmm}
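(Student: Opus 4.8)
The plan is to run Sonin's device with the roles of $y$ and $y'$ interchanged, so that the auxiliary function detects $|y'|$ at the zeros of $y$ rather than $|y|$ at the critical points of $y$. Concretely, I would introduce
$$
g(x) := \phi(x)\, y(x)^2 + y'(x)^2,
$$
and compute, using the equation $y'' = -\phi y$, that $g'(x) = \phi'(x)\, y(x)^2$. Since $\phi$ is decreasing we have $\phi' \le 0$, so $g$ is non-increasing on $[a,b]$. Note that, in contrast to Theorem~\ref{T:S}, positivity of $\phi$ plays no role here.

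Next, let $x_1 < x_2 < \dots < x_m$ enumerate the zeros of $y$ in $[a,b]$; there are only finitely many because a nontrivial solution of a second-order linear ODE has isolated zeros. At each such point $g(x_k) = y'(x_k)^2$, and moreover $y'(x_k) \neq 0$ — otherwise $y$ and $y'$ would vanish simultaneously and uniqueness would force $y \equiv 0$. Combining this with the monotonicity of $g$ already gives $|y'(x_{k+1})| \le |y'(x_k)|$; it remains only to upgrade this to a strict inequality.

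For that, I would look at consecutive zeros and write
$$
g(x_{k+1}) - g(x_k) = \int_{x_k}^{x_{k+1}} \phi'(x)\, y(x)^2 \, dx .
$$
The integrand is $\le 0$, and it is not identically zero: strict monotonicity of $\phi$ forces $\int_{x_k}^{x_{k+1}} \phi' < 0$, so $\phi' < 0$ on a set of positive measure in $(x_k, x_{k+1})$, while $y$ — having isolated zeros — is nonzero on a set of full measure there. Hence the integral is strictly negative, giving $y'(x_{k+1})^2 < y'(x_k)^2$, i.e. $|y'(x_{k+1})| < |y'(x_k)|$, which is the claim. The argument presents essentially no obstacle; the only point needing a moment's care is precisely this strictness step, namely verifying that $\phi'\, y^2$ cannot vanish almost everywhere on a gap between two consecutive zeros of $y$.
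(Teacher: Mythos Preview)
Your proof is correct and follows essentially the same route as the paper: the paper introduces the identical auxiliary function $f(x)=[y'(x)]^2+\phi(x)[y(x)]^2$ and records that $f'(x)=\phi'(x)[y(x)]^2<0$. If anything, you are more careful than the paper about the strictness step, since strictly decreasing $C^1$ functions need not have $\phi'<0$ pointwise; your integral argument (noting that $y$ is nonvanishing on the open interval between consecutive zeros and that $\int_{x_k}^{x_{k+1}}\phi'<0$) cleanly fills that small gap.
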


\begin{proof}
If $f(x) = [y'(x)]^2 + \phi(x) [y(x)]^2$, then $f'(x)=\phi'(x) [y(x)]^2 < 0$.
\end{proof}

We are now prepared to prove the main result of this section:

\begin{thm}
For $M\sim{}$GUE, we have
$$
\sup_x R^{(1)}(x) =  R^{(1)}(0) = \tfrac{1}{\sqrt{2\pi}} \frac{(2k+1)!}{2^{2k}[k!]^2}
    = \tfrac1{\pi} \sqrt{n} \, \exp\bigl\{\tfrac1{4n} + O(n^{-3})\bigr\}
$$
 when $n=2k+1$ is odd.  On the other hand, when $n$ is even,
$$
\sup_x R^{(1)}(x) = R^{(1)}(\pm x_n) = \tfrac{n}{\sqrt{2\pi}} p_{n-1}(x_n)^2 e^{-\frac12 x_n^2},
$$
where $x_n$ is the smallest positive zero of $p_{n}$. Moreover, $R^{(1)}(x) < R^{(1)}(\pm x_n)$ for $x\neq \pm x_n$.
\end{thm}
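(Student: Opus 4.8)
The plan is to transfer everything to the Hermite functions $\psi_k(x):=p_k(x)\,e^{-x^2/4}$. By \eqref{Hode} these solve $\psi_k''+\phi_k\,\psi_k=0$ with $\phi_k(x):=k+\tfrac12-\tfrac{x^2}{4}$, a $C^1$ function that is strictly decreasing on $[0,\infty)$; this is precisely the hypothesis of Lemma~\ref{L:Sonin}. Set $g:=\sqrt{2\pi}\,R^{(1)}=\sum_{k=0}^{n-1}\psi_k^2$, which is even, nonnegative, and decays to $0$ at $\pm\infty$, so its supremum is attained at some $x^\ast\ge 0$ at which it has a local maximum. From \eqref{Hode} one extracts the first-order relations $\psi_n'=\sqrt n\,\psi_{n-1}-\tfrac x2\,\psi_n$ and $\psi_{n-1}'=\tfrac x2\,\psi_{n-1}-\sqrt n\,\psi_n$. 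Substituting these into the second formula of Lemma~\ref{1pt}, rewritten as $g=\sqrt n\,[\psi_n'\psi_{n-1}-\psi_n\psi_{n-1}']$, produces the two identities that drive the whole argument:
\[
g=n\bigl(\psi_n^2+\psi_{n-1}^2\bigr)-\sqrt n\,x\,\psi_n\psi_{n-1}
\qquad\text{and}\qquad
g'=-\sqrt n\,\psi_n\psi_{n-1},
\]
the second of which also follows from differentiating $g=\sqrt n[\psi_n'\psi_{n-1}-\psi_n\psi_{n-1}']$ and using $\phi_{n-1}-\phi_n=-1$.

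Next I would pin down the local maxima of $g$. By the second identity, the critical points of $g$ are exactly the zeros of $\psi_n$ together with the zeros of $\psi_{n-1}$; consecutive Hermite functions have no common zero (by the uniqueness theorem for the $\psi_k$-equation) and their zeros strictly interlace, so $g'$ changes sign at each critical point and $g$ alternates strictly between local maxima and local minima. The largest critical point is the largest zero of $\psi_n$ (interlacing), and it is a local maximum because $g\downarrow 0$ beyond it; walking inward and matching the interlacing pattern against the parities of $\psi_n$ and $\psi_{n-1}$, one finds that the local maxima of $g$ on $[0,\infty)$ are exactly the zeros of $\psi_n$ lying in $[0,\infty)$. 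When $n=2k+1$ is odd this set contains $x=0$ (as $p_n$ is odd), so $x=0$ is a local maximum; when $n$ is even it does not, and $x=0$ is a local minimum --- this is the origin of the even/odd dichotomy.

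The comparison of the maximal values is then pure Sonin. At any zero $a$ of $\psi_n$ the first identity degenerates, using $\psi_n(a)=0$ and hence $\psi_n'(a)=\sqrt n\,\psi_{n-1}(a)$, to $g(a)=n\,\psi_{n-1}(a)^2=[\psi_n'(a)]^2$. Applying Lemma~\ref{L:Sonin} to $\psi_n$ on $[0,\infty)$ (where $\phi_n$ is $C^1$ and strictly decreasing, notwithstanding $\phi_n'(0)=0$), the values $[\psi_n'(\,\cdot\,)]^2$ at the successive zeros of $\psi_n$ in $[0,\infty)$ form a strictly decreasing sequence. Hence $g$ is strictly largest at the smallest such zero --- namely $x_n$, the smallest positive zero of $p_n$, when $n$ is even, and $x=0$ when $n$ is odd --- and strictly smaller at every other zero of $\psi_n$, hence (by the monotonicity of $g$ between consecutive critical points) strictly smaller at every other real $x$. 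Substituting back into the first identity gives $R^{(1)}(\pm x_n)=\tfrac n{\sqrt{2\pi}}p_{n-1}(x_n)^2e^{-x_n^2/2}$ for $n$ even, and $R^{(1)}(0)=\tfrac n{\sqrt{2\pi}}p_{2k}(0)^2=\tfrac1{\sqrt{2\pi}}\tfrac{(2k+1)!}{2^{2k}[k!]^2}$ for $n=2k+1$, once one notes $p_{2k}(0)^2=\binom{2k}{k}2^{-2k}$ from \eqref{HermiteDefn}; the stated asymptotic is then Stirling's formula.

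The only place real care is needed --- and hence the main obstacle --- is the bookkeeping in the second paragraph: establishing that the local maxima of $g$ sit exactly at the zeros of $\psi_n$, which rests on the interlacing of the zeros of consecutive Hermite functions together with a parity-sensitive count of the sign changes of $g'$ on $[0,\infty)$, and it is exactly this count that forces $x=0$ to be a maximum when $n$ is odd but a minimum when $n$ is even. By contrast, the two identities, the single invocation of Lemma~\ref{L:Sonin}, and the closing evaluations are all short and routine.
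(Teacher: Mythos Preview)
Your proposal is correct and follows essentially the same route as the paper: compute $g'=-\sqrt{n}\,\psi_n\psi_{n-1}$, use interlacing together with decay at infinity to place the local maxima at the zeros of $\psi_n$, observe that at such zeros $g=[\psi_n']^2$, and then apply Lemma~\ref{L:Sonin} to $\psi_n$ on $[0,\infty)$. Your additional identity $g=n(\psi_n^2+\psi_{n-1}^2)-\sqrt{n}\,x\,\psi_n\psi_{n-1}$ and the explicit evaluation of $p_{2k}(0)^2$ simply flesh out details the paper leaves to the reader.
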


\begin{proof}
From Lemma~\ref{1pt} and \eqref{Hode} we find
$$
\partial_x R^{(1)}(x) = - \sqrt{\tfrac{n}{2\pi}} \, p_{n}(x) p_{n-1}(x) e^{-\frac12 x^2}.
$$

As the zeros of $p_{n}$ and $p_{n-1}$ interlace (cf. \cite[Theorem~3.3.2]{Szego}) and the last critical point of $R^{(1)}(x)$ must be a local maximum (since the one-point function decays at infinity), we see that the local maxima of $R^{(1)}(x)$ occur at the zeros of $p_{n}$, while those of $p_{n-1}$ correspond to local minima. Moreover, Lemma~\ref{1pt} and \eqref{Hode} reveal that 
$$
R^{(1)}(x) =  \tfrac{1}{\sqrt{2\pi}} \bigl| p_{n}'(x) e^{-x^2/4} \bigr|^2 \qtq{whenever} p_n(x) =0.
$$

On the other hand, by \eqref{Hode}, $y_n(x):=p_n(x)e^{-x^2/4}$ satisfies
$$
y_n''(x) + (n+\tfrac12-\tfrac14 x^2) y_n(x)=0.
$$
In this way, the theorem follows by applying Lemma~\ref{L:Sonin} on the interval $[0,\infty)$ and the trivial observation that $p_k(-x)=(-1)^kp_k(x)$. 
\end{proof}

One way to generate a matrix of the form \eqref{E:M} is to begin with the matrix $A$ and allow each of the matrix entries to perform Brownian motion.  A famous calculation of Dyson \cite{DysonBrownian} determines the stochastic process followed by the eigenvalues of $M$ in this setting.  Concretely, the eigenvalues perform the diffusion
\begin{equation}\label{E:DBM}
d\lambda_j = dB_j(t) + \tfrac{\beta}{2} \sum_{k\neq j} \frac{dt}{\lambda_j-\lambda_k}
\end{equation}
where $B_1,\ldots,B_n$ denote independent Brownian motions.  Here, $\beta=1$ for real symmetric matrices, $\beta=2$ for hermitian matrices, and $\beta=4$ in the quaternion
case.  Nevertheless, the diffusion makes perfect sense for any $\beta\geq0$.  We define the one-point function $R^{(1)}_\beta(x)$ associated to such a diffusion at time $t=1$ in the natural way:
\begin{equation}
R^{(1)}_\beta(x) = \E \sum_j \delta(x-\lambda_j(t=1)).  
\end{equation}
Evidently, the function $R^{(1)}_\beta$ depends also on the initial data for the diffusion.  In the setting of \eqref{E:M}, this is given by the eigenvalues of the matrix $A$.  As in the study of \eqref{E:M}, there is no loss of generality in restricting to time $t=1$, since other values can then be recovered by scaling.

The results presented here, together with some further fragmentary evidence, leads us to the following conjecture:
\begin{conjecture}\label{CC}
Among all initial conditions for Dyson Brownian motion \eqref{E:DBM}, those maximizing $R^{(1)}(0)$ are precisely the following:
\begin{align*}
\lambda_1=\cdots=\lambda_n = \begin{cases} 0 & \text{if $\beta\leq 1$ or $n$ is odd,}\\ \pm x_{n,\beta}\neq 0 & \text{if $\beta > 1$ and $n$ is even.} \end{cases}
\end{align*}
\end{conjecture}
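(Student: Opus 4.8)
The plan is to split Conjecture~\ref{CC} into a reduction to ``collapsed'' initial data and a one-dimensional optimization, the latter already settled for $\beta=2$ by the theorem above. Write $R^{(1)}_\beta(x;\mu)$ for the one-point function at time $t=1$ of the diffusion \eqref{E:DBM} started from the configuration $\mu=(\mu_1,\dots,\mu_n)$. Since the drift in \eqref{E:DBM} depends only on the differences $\lambda_j-\lambda_k$ and Brownian motion is translation invariant, the process is translation covariant, and so
\[
R^{(1)}_\beta(x;\mu+c\mathbf 1)=R^{(1)}_\beta(x-c;\mu).
\]
In particular, for a collapsed configuration $\mu=c\mathbf 1$ we get $R^{(1)}_\beta(0;c\mathbf 1)=\rho_\beta(c)$, where $\rho_\beta(x):=R^{(1)}_\beta(x;\mathbf 0)$ is the one-point function of \eqref{E:DBM} launched from the fully collapsed configuration at the origin; here we used that $\rho_\beta$ is even. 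This $\rho_\beta$ is exactly the one-point function of the Gaussian $\beta$-ensemble, with joint density proportional to $\prod_{i<j}|\lambda_i-\lambda_j|^\beta\prod_i e^{-\lambda_i^2/2}$, and for $\beta=2$ it is the GUE one-point function of Lemma~\ref{1pt}. Hence, maximizing $R^{(1)}_\beta(0;\cdot)$ over collapsed configurations is the same as locating the global maximum of $\rho_\beta$. For $\beta=2$ this is precisely what the theorem above accomplishes: the maximum is at $x=0$ for $n$ odd and at $x=\pm x_n$ for $n$ even. Thus Conjecture~\ref{CC} holds for $\beta=2$ as soon as one knows the optimizer is collapsed, and we set $x_{n,2}=x_n$.

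Two tasks remain. The first is to locate the global maximum of $\rho_\beta$ for general $\beta$. Here the Hermite/Christoffel--Darboux structure behind Lemma~\ref{L:Sonin} is gone, so I would begin with the local question at the origin for $n$ even, where $\rho_\beta$ is even and $\rho_\beta'(0)=0$, so that the sign of $\rho_\beta''(0)$ decides between a local maximum and a local minimum. Writing
\[
\rho_\beta(x)\propto e^{-x^2/2}\int_{\R^{n-1}}\ \prod_{2\le i<j}|\lambda_i-\lambda_j|^\beta\ \prod_{j\ge 2}|x-\lambda_j|^\beta\, e^{-\lambda_j^2/2}\,d\lambda
\]
and expanding $\prod_{j\ge2}|x-\lambda_j|^\beta$ to second order in $x$ reduces $\rho_\beta''(0)$ to a combination of the weighted expectations of $\sum_{j}\lambda_j^{-1}$ and $\sum_{j}\lambda_j^{-2}$ in the $(n-1)$-point ensemble carrying the extra factor $\prod_{j}|\lambda_j|^\beta$, together with the $-1$ coming from the Gaussian prefactor. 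I expect this Selberg-type computation to show that $\rho_\beta''(0)$ is negative for $\beta<1$, vanishes at $\beta=1$, and is positive for $\beta>1$; this matches the degenerate local maximum reported for the GOE case and explains the threshold $\beta=1$ in the conjecture. Upgrading this to a statement about the \emph{global} maximum, and in particular producing the competing maxima at $\pm x_{n,\beta}$, would require a genuine $\beta$-analogue of the Sonin monotonicity of Lemma~\ref{L:Sonin}, which is not presently available.

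The crux is the second task: showing that the global maximizer over \emph{all} initial data is collapsed. Through the transition kernel $p_1$ of \eqref{E:DBM} and exchangeability one has
\[
F(\mu):=R^{(1)}_\beta(0;\mu)=n\int_{\R^{n-1}}p_1\bigl(\mu;(0,y_1,\dots,y_{n-1})\bigr)\,dy.
\]
Restricting to ordered $\mu$, the configurations fill a Weyl chamber whose walls $\{\mu_i=\mu_{i+1}\}$ are the partially collapsed configurations and whose apex is the fully collapsed configuration; since $F$ extends continuously to the boundary, the assertion we want is that $F$ attains its maximum at this apex rather than at an interior critical point. For $\beta=2$ the kernel $p_1$ is given explicitly by the Harish-Chandra--Itzykson--Zuber (Karlin--McGregor) formula, and the deformed-GUE one-point function of \cite{MR2641363} makes $F$ completely explicit in terms of the eigenvalues of $A$; the goal then becomes an inequality among Hermite-type expressions stating that bringing the points together does not decrease $F$, which I would attempt through a monotonicity-under-contraction argument aimed at the eventual collapse point.

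This last step is the principal obstacle, and the reason the statement is offered only as a conjecture. A naive Schur-concavity argument cannot succeed: even among the collapsed configurations the origin is suboptimal when $n$ is even, since $\rho_\beta(0)<\rho_\beta(\pm x_{n,\beta})$ because the density has a hole at the origin; the optimal collapse point is therefore the shifted $\pm x_{n,\beta}$, not the barycenter, so any contraction must be centered at that shifted point rather than at the center of mass. I would first settle $n=2$ for all $\beta\ge0$ by a direct computation with the explicit two-particle transition kernel---this should already reveal whether a collapsed configuration beats a symmetric splitting $(a,-a)$ and confirm that the $\beta=1$ threshold governs the transition---and then seek a convexity or rearrangement principle for $p_1$ robust enough to handle general $n$ together with the shifted collapse point.
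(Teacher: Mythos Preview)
The statement you are addressing is labeled a \emph{conjecture} in the paper, and the paper offers no proof of it; the authors explicitly say it is supported only by ``fragmentary evidence.'' There is therefore no paper proof to compare your proposal against.

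Your write-up is not a proof either, and to your credit you say so: you isolate two tasks---(i) showing the optimizer over all initial data is a collapsed configuration, and (ii) locating the global maximum of $\rho_\beta$ for general $\beta$---and acknowledge that neither is settled. Your reduction via translation covariance is correct, and your observation that the $\beta=2$ collapsed case is exactly what the paper's theorem handles is accurate. But the ``crux'' step (i) is left as a hope (HCIZ/Karlin--McGregor plus some unspecified contraction monotonicity), and step (ii) is only a local calculation at the origin with no mechanism for passing to a global statement. So what you have is a reasonable research outline that correctly identifies the obstacles, not a proof; since the paper claims no more than that, there is no discrepancy to report.
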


We do not have a conjecture about the precise value of $x_{n,\beta}$.  

%%%%%%%%%%%%%%%%%%%%%%%%%%%%%%%%%%%%%%%%%%%%%%%%%%%%%%%%%%%%%%%%%%%%%%%%%%%%%%%%%%%%%%%%%
\section*{Acknowledgements}
%%%%%%%%%%%%%%%%%%%%%%%%%%%%%%%%%%%%%%%%%%%%%%%%%%%%%%%%%%%%%%%%%%%%%%%%%%%%%%%%%%%%%%%%%

The research reported herein was completed during the ``Harmonic Analysis and Partial Differential Equations'' workshop at RIMS in June 2018.  We are grateful to RIMS and to our hosts Hideo Takaoka and Satoshi Masaki for the opportunity to participate in this event.

R.~K. was supported, in part, by NSF grant DMS-1600942 and M.~V. by grant DMS-1500707.  This work was also supported by the Research Institute for Mathematical Sciences, a Joint Usage/Research Center located in Kyoto University.

%%%%%%%%%%%%%%%%%%%%%%%%%%%%%%%%%%%%%%%%%%%%%%%%%%%%%%%%%%%%%%%%%%%%%%%%%%%%%%%%%%%%%%%%%

\end{document}